\theoremstyle{plain}
\newtheorem{thm}{Theorem}
\newtheorem{lem}[thm]{Lemma}
\newtheorem{rem}[thm]{Remark}
\theoremstyle{definition}
\newtheorem{dfn}[thm]{Definition}
\newtheorem{ex}[thm]{Example}
\newtheorem{problem}[thm]{Problem}
\newtheorem{fc}[thm]{Final Conclusion}
\title{\bf
Infinitesimal Conformal Rigidity on Damek–Ricci Spaces}
\author{Hiroyasu Satoh and Hemangi Madhusudan Shah}
\date{\today}
\begin{document}

\maketitle

\begin{abstract}
We show that every conformal vector field on a Damek–Ricci space is necessarily Killing, establishing a strong form of infinitesimal conformal rigidity.
Although this rigidity phenomenon is classically known in the Einstein setting, our proof follows a completely different approach.
We formulate the conformal Killing condition as an explicit system of partial differential equations on the solvable Lie group model and analyze it directly.
This local and analytic method yields a constructive proof of rigidity without relying on global arguments or transformation groups.
\end{abstract}

\begin{itemize}[leftmargin=73pt]
    \item[{\bf Keywords:}] Damek--Ricci space; Conformal vector field;  Killing vector field; 
\end{itemize}

\indent
{\bf MSC 2020 Classification:} 53C24; 53C25; 53C35

\section{Introduction}

Conformal vector fields are smooth vector fields $\xi$ on a Riemannian manifold $(M, g)$ whose local flows preserve the conformal structure of the metric,
that is, they satisfy $\mathcal{L}_\xi g = 2\rho g$ for some smooth function $\rho$,
called the potential function.
When $\rho \equiv 0$, the field $\xi$ is Killing, meaning it generates an isometry.
It is a classical fact that space forms --- Riemannian manifolds of constant sectional curvature --- admit the richest possible collection of conformal vector fields.
Indeed, the space of all conformal vector fields of an $n$-dimensional space has maximal dimension $\frac{(n+1)(n+2)}{2}$, and this bound characterizes conformally flat space (see \cite[p.164, Theorem 3.2]{Yano1957}).
Classical conformal–rigidity goes back to a conjecture of
Lichnerowicz, stating that every complete Riemannian manifold whose conformal group is essential must be conformally equivalent to either the Euclidean space or the standard sphere.
The compact case was confirmed by Lelong–Ferrand and Obata, and the non–compact case was 
established by Ferrand, completing and correcting an earlier approach of Alekseevski\u{i}; see \cite{Ferrand1996, Alekseevskii72}.
Beyond the global theory of conformal transformation groups, the existence of nontrivial conformal vector fields has also been studied from various geometric viewpoints.
In particular, in the setting of Einstein manifolds, classical results due to Tashiro \cite{Tashiro1965} and Kanai\cite{Ka} provide a strong classification of those admitting non-homothetic conformal vector fields, showing that geometry of such manifolds is constrained 
to a large extent.

In recent work \cite{HSHS}, 
the authors revisited the problem of existence of conformal
vector fields on complex hyperbolic spaces $\mathbb{C}H^n$ for $n \ge 2$, and showed by a direct analytic approach that every conformal vector field is necessarily Killing.
The proof is based on writing $\mathbb{C}H^n$ as a solvable Lie group with a left-invariant metric -- a special case of what is known as a Damek--Ricci space -- and analyzing the conformal Killing equation in this setting.
By decomposing the resulting overdetermined system of partial differential equations,
the authors exploited properties of harmonic and holomorphic functions to reduce the problem to the vanishing of a conformal factor.
This was achieved using expansions in terms of harmonic polynomials and their uniqueness properties.

The present paper generalizes that result to the setting of all Damek--Ricci spaces.
These spaces, introduced by Damek and Ricci in the 1990s \cite{DR}, are solvable Lie groups equipped with a canonical left-invariant metric, constructed from generalized Heisenberg algebras.
They were discovered as counterexamples to the Lichnerowicz conjecture,
which proposed that all harmonic manifolds (i.e., manifolds for which geodesic spheres have constant mean curvature) must be locally symmetric.
Damek--Ricci spaces are non-symmetric harmonic manifolds of nonpositive curvature,
and include 
the noncompact rank-one symmetric spaces with nontrivial center (complex, quaternionic and Cayley hyperbolic spaces). The real hyperbolic space corresponds to the degenerate case where the center vanishes; while not an H-type (hence not a Damek–Ricci space in the strict sense), we treat it separately when convenient.
Since every Damek–Ricci metric is Einstein with negative scalar curvature,
the nonexistence of non-homothetic conformal vector fields on such spaces is already implied at the level of existence by the classification results of Tashiro and Kanai for Einstein manifolds (see \cite[Thm.~G]{Ka}).
The contribution of the present paper is to provide a completely different perspective: a direct and local analysis of the conformal Killing equation on the solvable model, yielding a constructive PDE proof that avoids any global splitting or group–action arguments.


Our motivation stems from the observation that, for any Damek--Ricci space $S$, the Lie algebra of the complex hyperbolic plane $\mathbb{C}H^2$ naturally embeds into that of $S$ as a Lie subalgebra.
Geometrically, this means that every Damek--Ricci space contains a totally geodesic submanifold isometric to the complex hyperbolic plane.
This structural feature led us to ask whether the rigidity result on $\mathbb{C}H^n$ could be extended to the broader class of Damek--Ricci spaces.
We show that this is indeed the case, and we state our main result as follows:
\begin{thm}\label{mainDR}
Let $(M, g)$ be a Damek--Ricci space and $\xi$ a conformal vector field.
Then the potential function $\rho$ of $\xi$ must vanish identically.
That is, $\xi$ is a Killing vector field.
\end{thm}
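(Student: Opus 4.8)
The plan is to deduce from the conformal and Einstein conditions a rigid second-order equation for $\rho$, and then to contradict it using that a Damek--Ricci space is not a real hyperbolic space. Since a Damek--Ricci metric is Einstein, $\operatorname{Ric}=c\,g$ with $c<0$. Applying $\mathcal L_\xi$ to the Ricci tensor and combining the standard identity $\mathcal L_\xi\operatorname{Ric}=-(n-2)\nabla^2\rho-(\Delta\rho)\,g$ with $\mathcal L_\xi\operatorname{Ric}=\mathcal L_\xi(c\,g)=2c\rho\,g$, then tracing, gives $\Delta\rho=-\tfrac{n}{n-1}c\,\rho$ and hence the concircular equation
\[
\nabla^2\rho=\lambda\,\rho\,g,\qquad \lambda:=-\tfrac{c}{n-1}>0 .
\]
This holds on all of $M$, because $\xi$ and $\rho$ are globally defined; in particular --- unlike in the Yano--Nagano theorem invoked in the introduction --- no completeness hypothesis on the flow of $\xi$ is required, while $M$ itself is complete, being homogeneous. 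After rescaling $g$ we may assume $\lambda=1$.

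Since $\nabla\bigl(|\nabla\rho|^2\bigr)=2\rho\,\nabla\rho=\nabla(\rho^2)$, the function $k:=|\nabla\rho|^2-\rho^2$ is a constant, and where $\nabla\rho\neq0$ the unit field $u:=\nabla\rho/|\nabla\rho|$ is geodesic (a short computation from $\nabla^2\rho=\rho\,g$), so along its integral curves $\rho''=\rho$ and $(\rho')^2-\rho^2=k$. I would argue by cases on the sign of $k$. If $k<0$ then $\rho^2\geq -k$, so $\rho$ has a fixed sign, say $\rho\geq\sqrt{-k}>0$: when $\rho$ has no critical point, $\beta:=\operatorname{arccosh}\!\bigl(\rho/\sqrt{-k}\bigr)$ is a positive smooth function with $|\nabla\beta|\equiv1$, hence $\nabla\beta$ is a complete unit geodesic field along which $\beta$ decreases without bound, contradicting $\beta>0$; when $\rho$ has a critical point $p$, integrating $\rho''=\rho$ along unit geodesics from $p$ yields $\rho=\sqrt{-k}\cosh d(p,\cdot)$, so $\nabla^2 r=\coth r\,(g-dr\otimes dr)$ for $r=d(p,\cdot)$, and since $M$ is a Hadamard manifold, writing $g$ in geodesic polar coordinates about $p$ identifies $(M,g)$ with $\mathbb R H^n(-1)$ --- impossible. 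If $k=0$: either $\rho$ vanishes somewhere, in which case $\nabla\rho$ vanishes there too and $\rho''=\rho$ with zero data forces $\rho\equiv0$, the desired conclusion; or $\rho>0$ (say), and then $b:=\log\rho$ satisfies $|\nabla b|\equiv1$ and $\nabla^2 b=g-db\otimes db$, so $M$ is foliated by totally umbilic horospheres. If $k>0$ then $\rho$ has no critical point and, along each integral curve of $u$, $\rho=\sqrt k\,\sinh(s-s_0)$; thus $Z_0:=\rho^{-1}(0)$ is nonempty, and it is a totally geodesic hypersurface since $\nabla^2\rho|_{Z_0}=0$ while $|\nabla\rho|$ does not vanish on $Z_0$, so $M$ is a warped product $\mathbb R\times_{\cosh}Z_0$.

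In each of the two remaining cases $M$ would have to be a real hyperbolic space: for $k=0$, a complete homogeneous Einstein manifold carrying a totally umbilic horosphere is $\mathbb R H^n$ (via Heintze's structure theory of homogeneous Hadamard manifolds, or by a direct computation on the Damek--Ricci metric); for $k>0$, a homogeneous Einstein warped product $\mathbb R\times_{\cosh}Z_0$ forces $Z_0$ to have constant curvature, hence $M=\mathbb R H^n$ again. But a Damek--Ricci space, having non-trivial centre $\mathfrak z$, is not of constant curvature, a contradiction. Therefore $\rho\equiv0$, i.e.\ $\xi$ is Killing.

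The main obstacle is precisely this final step in each branch: converting the structural hypothesis into the geometric facts that a Damek--Ricci space is not a real hyperbolic space, admits no totally geodesic hypersurface, and admits no totally umbilic horosphere. These should follow from the explicit metric --- the shape operators of horospheres, which have distinct eigenvalues on $\mathfrak v$ and $\mathfrak z$; the relation $[\mathfrak v,\mathfrak v]=\mathfrak z$; the curvature tensor --- together with, in the symmetric cases, the classification of totally geodesic submanifolds, but each needs careful verification. A self-contained alternative, parallel to the treatment of $\mathbb C H^n$ in \cite{HSHS}, is to bypass the invariant argument and write $\mathcal L_\xi g=2\rho g$ directly in solvable coordinates $(x,z,t)$ on $S=NA$, decompose the resulting overdetermined system, and exploit harmonic-expansion and uniqueness arguments in the $(x,z)$-variables to force $\rho\equiv0$, with the concircular equation appearing as an intermediate step.
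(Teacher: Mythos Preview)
Your approach is genuinely different from the paper's and, where it is complete, it is correct; but the step you flag as ``the main obstacle'' is in fact the entire content of the theorem, and your proposed routes to close it do not work as stated.

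The reduction to the concircular equation $\nabla^2\rho=\lambda\rho\,g$ and the trichotomy on $k=|\nabla\rho|^2-\rho^2$ are standard (this is essentially Tashiro's 1965 classification) and correctly carried out. The $k<0$ case and the $k=0$ vanishing subcase are fine. In the remaining branches, however, you reduce to the assertion that a Damek--Ricci space admits no unit vector $\nu$ with $R_\nu=-\mathrm{Id}$ on $\nu^\perp$ (equivalently, is not a warped product $\mathbb{R}\times_{e^t}N$ or $\mathbb{R}\times_{\cosh t}N$). Your claim that ``a homogeneous Einstein warped product $\mathbb{R}\times_{\cosh}Z_0$ forces $Z_0$ to have constant curvature'' is not true in general: the Einstein condition only forces $Z_0$ to be Einstein with constant $-(n-2)$, and homogeneity of $M$ does not obviously descend to $Z_0$. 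Likewise, Heintze's theorem yields the solvable structure of a homogeneous Hadamard manifold but says nothing about umbilic foliations. The correct way to close the gap is the pointwise statement that for every unit $u\in\mathfrak{s}$ the Jacobi operator $R_u$ has at least two distinct eigenvalues on $u^\perp$; this is true, but it requires the explicit curvature tensor of $S$ and a case analysis over $u=\alpha A+V+Z$. You have not done this, and once one is computing curvature componentwise in the left-invariant frame, the economy over the paper's method largely evaporates.

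The paper, by contrast, never invokes the Einstein condition or Tashiro's theorem. It restricts the conformal Killing system to the $4$-dimensional subalgebra $\mathfrak{s}_0=\mathrm{span}\{V,J_ZV,Z,A\}$, observes that $e^af_3+ie^af_4$ is holomorphic in $(z,e^a)$, expands in homogeneous harmonic polynomials, and then feeds this expansion back into two further equations from the $\mathfrak{s}_0$-block to kill all but two coefficients. This yields $f_4=C_1^{[1]}+2zC_1^{[2]}$, independent of $a$, hence $\rho=\partial_af_4=0$. The argument is purely computational and self-contained; what it loses in conceptual transparency it gains by avoiding any appeal to global curvature facts about Damek--Ricci spaces beyond what is already encoded in the solvable coordinate frame.
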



The proof follows the general strategy developed in \cite{HSHS} for complex hyperbolic space,
with suitable adaptations to the Damek–Ricci setting.
More precisely, we reduce the problem to a system of differential equations on a four-dimensional subalgebra isomorphic to the complex hyperbolic plane, and show that the conformal factor must vanish identically.

This result places Damek--Ricci spaces in striking contrast to real hyperbolic space,
which admits many non-Killing conformal vector fields constructed from Busemann functions.
It reinforces the view that, despite their rich geometry,
Damek--Ricci spaces exhibit a strong form of conformal rigidity analogous to the classical theorem of Lichnerowicz for compact Einstein K\"ahler manifolds (see \cite[2.125 Corollary]{Besse}).




\medskip

This paper is organized as follows.  
In Section 2, we review basic definitions and properties of conformal vector fields and introduce the structure of Damek--Ricci spaces.
Section 3 formulates the conformal Killing equation in the context of general Damek--Ricci spaces and rewrites it using left-invariant vector fields.
In Section 4, we analyze part of this system, showing that certain components must correspond to the real and imaginary parts of a holomorphic function.
Section 5 completes the proof of the main theorem by showing that the potential function of any conformal vector field must vanish, and discusses a related open problem motivated by the analytic structure revealed in the proof.

\section{Preliminaries}

\subsection{Conformal vector fields}

In this section, we recall the basic theory of conformal vector fields on Riemannian manifolds,
and summarize several classical results about the structure of the conformal transformation group.
We also present the explicit expressions for conformal vector fields on space forms of constant sectional curvature, such as the Euclidean space $\mathbb{E}^n$, the sphere $\mathbb{S}^n$, and the real hyperbolic space $\mathbb{R}H^n$.
\medskip

Let $(M, g)$ be a complete Riemannian manifold and let $\mathfrak{X}(M)$ denote the Lie algebra of smooth vector fields on $M$.

\begin{dfn}
A smooth vector field $\xi$ is said to be a \emph{conformal} vector field if there exists a smooth function $\rho \in C^\infty(M)$ such that
\begin{equation}\label{CVF}
\mathcal{L}_{\xi}g = 2\rho g,
\end{equation}
where $\mathcal{L}_{\xi}g$ denotes the Lie derivative of $g$ with respect to $\xi$.
The function $\rho$ is called the \emph{potential function} of $\xi$.
In particular, if $\rho$ is constant, then $\xi$ is called a \emph{homothetic} vector field; if $\rho \equiv 0$, then $\xi$ is a \emph{Killing} vector field.
\end{dfn}

Conformal vector fields are infinitesimal generators of conformal transformations on $(M, g)$, that is, smooth one-parameter families of diffeomorphisms $\{\varphi_t\}$ such that $\varphi_t^*g = e^{2\rho_t}g$ for some smooth functions $\rho_t$ depending on $t$.
This classical correspondence between conformal vector fields and conformal transformations has been studied in detail in the foundational works of Yano \cite[Chapter VII]{Yano1957}.

\begin{lem}\label{property_L}
The Lie derivative of the metric $g$ satisfies the following properties:
\begin{enumerate}
\item $\mathcal{L}_{\xi_1+\xi_2}g = \mathcal{L}_{\xi_1}g + \mathcal{L}_{\xi_2}g$ \quad for all $\xi_1, \xi_2 \in \mathfrak{X}(M)$.
\item $\mathcal{L}_{f\xi}g = f\,\mathcal{L}_{\xi}g + 2\,\mathrm{sym}(df \otimes \xi^\flat)$ \quad for all $f \in C^\infty(M)$ and $\xi \in \mathfrak{X}(M)$.
\end{enumerate}
Here, $\xi^\flat = g(\xi,\,\cdot\,)$ denotes the metric dual of $\xi$, and
\[
\mathrm{sym}(df \otimes \xi^\flat) := \dfrac{1}{2}(df \otimes \xi^\flat + \xi^\flat \otimes df).
\]
\end{lem}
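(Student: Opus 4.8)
The plan is to reduce both identities to a single coordinate-free expression for $\mathcal{L}_{\xi}g$. Recall that for any vector fields $X, Y$ one has $(\mathcal{L}_{\xi}g)(X,Y) = \xi\,g(X,Y) - g([\xi,X],Y) - g(X,[\xi,Y])$, and that metric compatibility together with the torsion-freeness of the Levi-Civita connection $\nabla$ rewrites this as
\[
(\mathcal{L}_{\xi}g)(X,Y) = g(\nabla_X \xi,\, Y) + g(X,\, \nabla_Y \xi).
\]
Both forms display the dependence on $\xi$ explicitly, and that is all that is needed for the two assertions.

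For part (1), I would simply observe that the right-hand side of the connection formula is additive in $\xi$: since $\nabla_X(\xi_1 + \xi_2) = \nabla_X \xi_1 + \nabla_X \xi_2$ and $g$ is bilinear, evaluating $\mathcal{L}_{\xi_1 + \xi_2}g$ on an arbitrary pair $(X,Y)$ and splitting each term produces $(\mathcal{L}_{\xi_1}g)(X,Y) + (\mathcal{L}_{\xi_2}g)(X,Y)$. As $X, Y$ are arbitrary, the tensor identity follows.

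For part (2), the essential input is the Leibniz rule $\nabla_X(f\xi) = (Xf)\,\xi + f\,\nabla_X\xi$. Substituting it into the connection formula gives
\[
(\mathcal{L}_{f\xi}g)(X,Y) = f\bigl(g(\nabla_X\xi,Y) + g(X,\nabla_Y\xi)\bigr) + (Xf)\,g(\xi,Y) + (Yf)\,g(\xi,X).
\]
The bracketed term is $f\,(\mathcal{L}_{\xi}g)(X,Y)$; using $Xf = df(X)$ and $g(\xi,Y) = \xi^\flat(Y)$, the remaining two terms are $df(X)\,\xi^\flat(Y) + \xi^\flat(X)\,df(Y) = (df\otimes\xi^\flat + \xi^\flat\otimes df)(X,Y)$, which equals $2\,\mathrm{sym}(df\otimes\xi^\flat)(X,Y)$ by the definition of $\mathrm{sym}$ given in the lemma. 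Since $(X,Y)$ is arbitrary, this yields the claimed identity of symmetric $2$-tensors.

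There is no genuine obstacle here: the lemma is a formal consequence of the definition of $\mathcal{L}_{\xi}g$ and the derivation property of $\nabla$. The only point demanding a little care is the bookkeeping in part (2) — keeping the symmetrization convention, the factor $2$, and the order of $df$ and $\xi^\flat$ consistent with the statement — but this is purely routine.
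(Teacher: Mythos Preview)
Your proof is correct and follows essentially the same route as the paper: both use the connection formula $(\mathcal{L}_{\xi}g)(X,Y)=g(\nabla_X\xi,Y)+g(X,\nabla_Y\xi)$ and the Leibniz rule for $\nabla_X(f\xi)$ to expand and regroup terms. The only cosmetic difference is that the paper dismisses part~(1) in one line by invoking linearity of the Lie derivative, whereas you spell it out via additivity of $\nabla$.
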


\begin{proof}
We prove only (2), since (1) follows from the linearity of the Lie derivative.
Let $X, Y \in \mathfrak{X}(M)$ be arbitrary vector fields. Then
\begin{align*}
(\mathcal{L}_{f\xi}g)(X, Y)
&= g(\nabla_X(f\xi), Y) + g(X, \nabla_Y(f\xi)) \\
&= X(f)\,g(\xi, Y) + f\,g(\nabla_X \xi, Y) + Y(f)\,g(X, \xi) + f\,g(X, \nabla_Y \xi) \\
&= f\,\mathcal{L}_{\xi}g(X, Y) + 2\,\mathrm{sym}(df \otimes \xi^\flat)(X, Y),
\end{align*}
as claimed.
\end{proof}

It is known that the space of conformal vector fields forms a real vector space, and its dimension coincides with that of the local conformal transformation group.
A classical result (see \cite[p.164, Theorem 3.2]{Yano1957}) 
states that this dimension is bounded above by $\frac{1}{2}(n+1)(n+2)$ for an $n$-dimensional Riemannian manifold with $n \geq 3$.
Moreover, 
real space forms represent the maximal possible case for conformal symmetries.
We now illustrate the structure of conformal vector fields on simply connected space forms of constant sectional curvature—namely, the Euclidean space $\mathbb{E}^n$, the unit sphere $\mathbb{S}^n(1)\subset\mathbb{R}^{n+1}$, and the real hyperbolic space $\mathbb{R}H^n(-1)$ of constant curvature $(-1)$. 

\begin{ex}
In the $n$-dimensional Euclidean space $\mathbb{E}^n$, the conformal vector fields are generated by translations, rotations, homotheties, and special conformal transformations.
Explicitly, any conformal vector field $\xi$ on $\mathbb{E}^n$ can be written as
\[
\xi(\bm{x}) = \bm{a} + A\bm{x} + b_1 \bm{x} + \dfrac{1}{2}|\bm{x}|^2 \bm{b}_2 - \langle \bm{b}_2, \bm{x} \rangle \bm{x},
\]
where $\bm{a}, \bm{b}_2 \in \mathbb{R}^n$ are constant vectors, $A \in \mathfrak{so}(n)$ is a skew-symmetric matrix representing infinitesimal rotation, and $b_1 \in \mathbb{R}$ corresponds to infinitesimal scaling \cite{BL}.
The potential function is given by $\rho(\bm{x})=b_1 - \langle \bm{b}_2, \bm{x}\rangle$.
\end{ex}

\begin{ex}
Let $\mathbb{S}^n(1) \subset \mathbb{R}^{n+1}$ be the unit sphere in Euclidean space. For any constant vector $\bm{a} \in \mathbb{R}^{n+1}$ and skew-symmetric $(n+1)-$matrix $A\in \mathfrak{so}(n+1)$, the vector field
\[
\xi(\bm{x}) = A\bm{x} + \bm{b} - \langle \bm{b}, \bm{x} \rangle \bm{x}
\]
defines a conformal vector field on $\mathbb{S}^n$.
The potential function is given by $\rho(\bm{x})=\langle \bm{b}, \bm{x}\rangle$.
\end{ex}

\begin{ex}
In the real hyperbolic space $\mathbb{R}H^n(-1)$ of constant sectional curvature $-1$, Busemann functions provide a concrete method for constructing conformal vector fields. Given a Busemann function $b$ associated with a boundary point, the gradient of its exponential, $\nabla e^b$, defines a conformal vector field (see \cite{HSHS}).
In the upper half-space model $\{ (\bm{x}, y) \mid \bm{x} \in \mathbb{R}^{n-1},\, y > 0 \}$, a general conformal vector field $\xi$ can be expressed explicitly as
\begin{multline*}
\xi(\bm{x},y)
=\left(\begin{array}{c}\bm{a}_0\\ b_0\end{array}\right)
+\left(\begin{array}{cc}A&-\bm{b}_1\\{ }^t\bm{b}_1&0\end{array}\right)\left(\begin{array}{c}\bm{x}\\ y\end{array}\right)
+a_1 \left(\begin{array}{c}\bm{x}\\ y\end{array}\right)
\\
+(\|\bm{x}\|^2+y^2) \left(\begin{array}{c}\bm{a}_2\\ b_2\end{array}\right)
-2\left\langle \left(\begin{array}{c}\bm{a}_2\\ b_2\end{array}\right),
\left(\begin{array}{c}\bm{x}\\ y\end{array}\right)\right\rangle
\left(\begin{array}{c}\bm{x}\\ y\end{array}\right),
\end{multline*}
where $a_1, b_0, b_2\in\mathbb{R}$, $\bm{a}_0, \bm{a}_2, \bm{b}_1\in \mathbb{R}^{n-1}$ and $A\in \mathfrak{so}(n-1)$.
The potential function is given by $\rho(\bm{x},y)=-\dfrac{1}{y}\left(b_0+\langle \bm{x}, \bm{b}_1\rangle +b_2(\|\bm{x}\|^2+y^2)\right)$.
It is not very difficult to see that this is a conformal vector field. For the derivation, see \cite{HSHS2}.
\end{ex}

\begin{rem}
On the sphere $\mathbb{S}^n(1)$ every non-trivial conformal vector field is non-homothetic but nevertheless complete, while on the Euclidean space $\mathbb{E}^{n}$ the classical dilations provide a complete homothetic (but non-isometric) conformal vector field.
In contrast, on the real hyperbolic space $\mathbb{R}H^{n}(-1)$ every
non-Killing conformal vector field is incomplete:
its integral curve reaches the ideal boundary in finite time and therefore fails to generate a one–parameter subgroup of $\mathrm{Conf}(\mathbb{R}H^{n}(-1))$.
\end{rem}



\subsection{Damek--Ricci spaces}

Let $\mathfrak{n} = \mathfrak{v} \oplus \mathfrak{z}$ be a two-step nilpotent Lie algebra equipped with an inner product $\langle \cdot, \cdot \rangle$, where $\mathfrak{z}$ is the center and $\mathfrak{v}$ its orthogonal complement.  
Since $\mathfrak{z}$ is center, the Lie bracket satisfies  
\[
[\mathfrak{v}, \mathfrak{v}] \subset \mathfrak{z}, \quad [\mathfrak{v}, \mathfrak{z}] = 0.
\]

\begin{dfn}
Such a Lie algebra $\mathfrak{n}$ is called a \emph{generalized Heisenberg algebra} if there exists a linear map $J_Z : \mathfrak{v} \to \mathfrak{v}$ for each $Z \in \mathfrak{z}$, defined by
\[
\langle J_Z V, W \rangle = \langle Z, [V, W] \rangle \quad (V, W \in \mathfrak{v}),
\]
and the map $J : \mathfrak{z} \to \mathrm{End}(\mathfrak{v})$ satisfies the condition
\[
J_Z^2 = -\|Z\|^2 \cdot \mathrm{Id}_{\mathfrak{v}} \quad \text{for all } Z \in \mathfrak{z}.
\]
\end{dfn}

Let $V \in \mathfrak{v}$ be a fixed unit vector. Then the subspace  
$J_{\mathfrak{z}} V := \{ J_Z V \mid Z \in \mathfrak{z} \}$  
is contained in $\mathfrak{v}$.
One has the orthogonal decomposition
\begin{equation}\label{decompv}
\mathfrak{v} = J_{\mathfrak{z}} V \oplus \ker \mathrm{ad}(V),
\end{equation}
where $\ker \mathrm{ad}(V) := \{ W \in \mathfrak{v} \mid [V, W] = 0 \}$ is the kernel of the adjoint map $\mathrm{ad}(V) : \mathfrak{v} \to \mathfrak{z}$.  
This decomposition will play a crucial role in isolating the components of $\mathfrak{v}$ that interact nontrivially with $V$, particularly in the study of conformal vector fields.

Given such a generalized Heisenberg algebra $\mathfrak{n}$, we define a solvable Lie algebra $\mathfrak{s} = \mathfrak{n} \oplus \mathfrak{a}$, where $\mathfrak{a} = \mathbb{R} A$ is one-dimensional, with the bracket relations
\[
[A, V] = \frac{1}{2} V, \quad [A, Z] = Z \quad (V \in \mathfrak{v},\; Z \in \mathfrak{z}).
\]
We equip $\mathfrak{s}$ with an inner product such that the decomposition $\mathfrak{s} = \mathfrak{v} \oplus \mathfrak{z} \oplus \mathfrak{a}$ is orthogonal, the restriction to $\mathfrak{n}$ agrees with the given inner product on $\mathfrak{n}$, and the generator $A$ is a unit vector.

Let $S$ be the simply connected Lie group with Lie algebra $\mathfrak{s}$, equipped with the left-invariant Riemannian metric $g$ induced by the inner product on $\mathfrak{s}$.

\begin{dfn}
Such a Lie group $S$ is called a \emph{Damek--Ricci space}.
\end{dfn}

\begin{rem}
A Riemannian manifold is called \emph{harmonic} if its volume density in geodesic normal coordinates centered at any point depends only on the radius.
Damek--Ricci spaces were introduced by Damek and Ricci \cite{DR} as a family of harmonic manifolds that are not symmetric, thus providing counterexamples to the Lichnerowicz conjecture in the noncompact case.
Here the Lichnerowicz asserted that any harmonic manifold must be locally symmetric.
While they include all noncompact rank-one symmetric spaces, the real hyperbolic space appears as a special case where the center vanishes, and most Damek–Ricci spaces are non-symmetric.
Each Damek--Ricci space is an Einstein manifold with nonpositive sectional curvature.
\end{rem}

By identifying the Damek--Ricci space $S$ with its Lie algebra $\mathfrak{s}$ via the exponential map, we may endow $S$ with global coordinates determined by a fixed orthonormal basis of $\mathfrak{s}$.
Let us fix an orthonormal basis $\{V_1, \ldots, V_k, Z_1, \ldots, Z_m, A\}$ of $\mathfrak{s} = \mathfrak{v} \oplus \mathfrak{z} \oplus \mathfrak{a}$, where each $V_i \in \mathfrak{v}$, $Z_r \in \mathfrak{z}$, and $A$ generates $\mathfrak{a}$.
Then, a natural global coordinate system on $S$ is given by
$$
(v_1, \ldots, v_k, z_1, \ldots, z_m, a),
$$
where $v_i$, $z_r$, and $a$ are the coordinate functions corresponding to $V_i$, $Z_r$, and $A$, respectively.
In these coordinates, the left-invariant vector fields corresponding to the basis can be expressed explicitly as differential operators involving these coordinate functions as follows:
\begin{align}\label{coordS}
V_i=&e^{a/2}\dfrac{\partial}{\partial v_i}
-\dfrac{e^{a/2}}{2}\sum_{r=1}^m\sum_{j=1}^kA^r_{ij}v_j\dfrac{\partial}{\partial z_r},\\
Z_r=&e^{a}\dfrac{\partial}{\partial z_r},\qquad
A=\dfrac{\partial}{\partial a},\notag
\end{align}
where $A^r_{ij}=g([V_i, V_j], Z_r)$.
See \cite[pp.82]{BTV} for detail.

\begin{rem}
When expressing the orthonormal basis vectors $V_i \in \mathfrak{s}$ as left-invariant vector fields on $S$, one must be careful to distinguish the roles of these symbols.
On the left-hand of \eqref{coordS},
$V_i$ denotes a left-invariant vector field on $S$.
On the right-hand side, in the expressions involving the structure constants $A_{ij}^r$,
the $V_i$ and $Z_r$ refer to elements of the Lie algebra $\mathfrak{s}$,
viewed as an abstract vector space equipped with a Lie bracket.
\end{rem}

\begin{rem}\label{exprsV_i}
In expressing the left-invariant vector fields $V_i$ on the Damek--Ricci space $S$,
we note the following:
\begin{enumerate}
\item In the case where $S = \mathbb{C}H^n$,
the center $\mathfrak{z}$ has dimension 1, and $\mathfrak{v}$ has dimension $2(n-1)$.
In this case, one can choose a basis of $\mathfrak{v}$ of the form
$$V_1,\ V_2 = J_Z V_1,\ V_3,\ V_4 = J_Z V_3,\ldots, V_{2n-3},\ V_{2(n-1)} = J_Z V_{2n-3}.$$
Then the expressions for $V_i$ as left-invariant vector fields reduce to
$$
V_{2i-1}=e^{a/2}\left(\dfrac{\partial}{\partial v_{2i-1}}
-\dfrac{v_{2i}}{2}\dfrac{\partial}{\partial z}\right),\quad
V_{2i}=e^{a/2}\left(\dfrac{\partial}{\partial v_{2i}}
+\dfrac{v_{2i-1}}{2}\dfrac{\partial}{\partial z}\right).
$$
\item In the general case, suppose the center $\mathfrak{z}$ has dimension $m$.
If we choose a basis of $\mathfrak{v}$ as 
$$V_1,\ V_2 = J_{Z_1} V_1,\ldots, V_{m+1} = J_{Z_m} V_1,\ V_{m+2}, \ldots, V_k,$$
then $V_{m+2}, \ldots, V_k \in \ker\mathrm{ad}(V_1)$.
In this setting, the expressions for $V_1$ and $V_{i+1}$ $(i=1,\ldots, m)$ simplify accordingly
\begin{align*}
V_1=&e^{a/2}\left(\dfrac{\partial}{\partial v_1}
-\dfrac{v_{2}}{2} \dfrac{\partial}{\partial z_1}
-\dfrac{1}{2}\sum_{j=2}^m v_{j+1}\dfrac{\partial}{\partial z_j}\right),\\
V_{i+1}=&e^{a/2}\left(\dfrac{\partial}{\partial v_{i+1}}+\dfrac{v_1}{2}\dfrac{\partial}{\partial z_i}
-\dfrac{1}{2}\sum_{\substack{r=1\\ r\ne i}}^m\sum_{\substack{j=2\\ j\ne i+1}}^kA^r_{i+1\,j}v_j\dfrac{\partial}{\partial z_r}\right).
\end{align*}
These coordinate expressions will prove useful when analyzing conformal vector fields explicitly in later sections.
\end{enumerate}
\end{rem}

Damek--Ricci spaces include, as special cases, rank-one symmetric spaces of non-compact type other than real hyperbolic spaces.
A Damek--Ricci space $S$ is said to satisfy the \emph{$J^2$-condition} if for any orthogonal vectors $Z_1, Z_2 \in \mathfrak{z}$, there exists $Z_3 \in \mathfrak{z}$ such that $J_{Z_1} \circ J_{Z_2} = J_{Z_3}$.
It is known that a Damek--Ricci space satisfies the $J^2$-condition if and only if it is a symmetric space.
This occurs precisely when $\dim \mathfrak{z} = 1, 3$, or $7$, the corresponding Damek--Ricci space is isometric to the complex hyperbolic space $\mathbb{C}H^n$, quaternionic hyperbolic space $\mathbb{H}H^n$, or the Octonionic hyperbolic plane $\mathbb{O}H^2$, respectively.
However, not all Damek--Ricci spaces with these center dimensions are symmetric; there also exist non-symmetric harmonic spaces in these categories.

\section{Conformal Killing equations on Damek--Ricci spaces}

We now investigate the existence of conformal vector fields on a general Damek--Ricci space $(S, g)$.
As established in the previous subsection, the Lie algebra $\mathfrak{s}$ of $S$ admits an orthogonal decomposition $\mathfrak{s} = \mathfrak{v} \oplus \mathfrak{z} \oplus \mathfrak{a}$,
where $\mathfrak{a} = \mathbb{R}A$ and $\mathfrak{n} := \mathfrak{v} \oplus \mathfrak{z}$ is a generalized Heisenberg algebra.
Let $\{V_1, \ldots, V_k\}$ and $\{Z_1, \ldots, Z_m\}$ be orthonormal bases of $\mathfrak{v}$ and $\mathfrak{z}$, respectively, and denote by the same symbols the associated left-invariant vector fields on $S$.
We consider vector fields on $S$ of the form
$$
\xi = \sum_{i=1}^{k} f_{1i} V_i + \sum_{r=1}^{m} f_{3r} Z_r + f_4 A,
$$
and seek conditions under which such a vector field is conformal with respect to the left-invariant metric $g$.
From Lemma~\ref{property_L}, the Lie derivative of the metric $g$ with respect to a vector field $\xi$ can be written as
\begin{multline}\label{Lxg}
\mathcal{L}_\xi g
= \sum_{i=1}^k f_{1i} \mathcal{L}_{V_i} g + \sum_{r=1}^m f_{3r} \mathcal{L}_{Z_r} g + f_4 \mathcal{L}_A g \\
+ 2 \sum_{i=1}^k \mathrm{sym}(df_{1i} \otimes V_i^\flat)
+ 2 \sum_{r=1}^m \mathrm{sym}(df_{3r} \otimes Z_r^\flat)
+ 2\, \mathrm{sym}(df_4 \otimes A^\flat),
\end{multline}
where $V_i^\flat$, $Z_r^\flat$, and $A^\flat$ denote the dual 1-forms with respect to $g$.

For convenience, we fix a unit vector $V := V_1 \in \mathfrak{v}$ and $Z := Z_1 \in \mathfrak{z}$, and define $V_2 := J_Z V$.
Accordingly, we denote the coordinate functions associated with $v_1$, $v_2$, and $z_1$ by $x$, $y$, and $z$, respectively, as introduced in the previous subsection.
Using the coordinate expressions of the left-invariant frame (see Remark~\ref{exprsV_i}~(2)), the vector fields $V$ and $J_Z V$ are given by
\begin{align}
V &= e^{a/2} \left(
\dfrac{\partial}{\partial x} - \dfrac{y}{2} \dfrac{\partial}{\partial z}
- \dfrac{1}{2} \sum_{r=2}^m v_{r+1} \dfrac{\partial}{\partial z_r}
\right), \\
J_Z V &= e^{a/2} \left(
\dfrac{\partial}{\partial y} + \dfrac{x}{2} \dfrac{\partial}{\partial z}
- \dfrac{1}{2} \sum_{j=2}^m \sum_{j=3}^k A^r_{2j} v_j \dfrac{\partial}{\partial z_r}
\right).
\end{align}
Let us now regard the vector fields as differential operators, and define
$$
D_x:=\dfrac{\partial}{\partial x} - \dfrac{1}{2} \sum_{r=2}^m v_{r+1} \dfrac{\partial}{\partial z_r},\qquad
D_y:=\dfrac{\partial}{\partial y} - \dfrac{1}{2} \sum_{r=2}^m \sum_{j=3}^k A^r_{2j} v_j \dfrac{\partial}{\partial z_r}.
$$
Then the vector fields $V$ and $J_Z V$ can be written as
$$
V = e^{a/2} \left(D_x-\dfrac{y}{2}\dfrac{\partial}{\partial z}\right),\qquad
J_Z V = e^{a/2} \left(D_y+\dfrac{x}{2}\dfrac{\partial}{\partial z}\right).
$$
We remark that the differential operators $D_x$ and $D_y$ are independent of the variables $z$ and $a$; that is, they involve no derivatives with respect to $z$ or $a$, and their coefficients do not depend on $z$ or $a$.

We also abbreviate $f_{11} = f_1$, $f_{12} = f_2$, and $f_{31} = f_3$ for simplicity.
Define the subspace
\[
\mathfrak{s}_0 := \mathrm{span}\{ V, J_Z V, Z, A \}.
\]
Then $\mathfrak{s}_0$ is a Lie subalgebra of $\mathfrak{s}$ and is isomorphic to the Lie algebra of the complex hyperbolic plane.
Let $\mathfrak{v}_0$ denote the orthogonal complement of $\mathrm{span}\{ V, J_Z V \}$ in $\mathfrak{v}$,
and let $\mathfrak{z}_0$ denote the orthogonal complement of $\mathbb{R} Z$ in $\mathfrak{z}$.

Accordingly, the Lie derivative $\mathcal{L}_\xi g$ with respect to vectors in $\mathfrak{s}_0$ can be computed in terms of the components of $\xi$,
and takes the following form:
\begin{equation*}
\mathcal{L}_Vg(V,A)=\frac{1}{2},\quad
\mathcal{L}_Vg(J_ZV,Z)=-1,
\end{equation*}
\begin{equation*}
\mathcal{L}_{J_ZV}g(V,Z)=1,\quad
\mathcal{L}_{J_ZV}g(J_ZV,A)=\dfrac{1}{2},
\end{equation*}
\begin{equation*}
\mathcal{L}_{Z}g(Z,A)=1,
\end{equation*}
\begin{equation*}
\mathcal{L}_{A}g(V,V)=\mathcal{L}_{A}g(J_ZV,J_ZV)=-1,\quad
\mathcal{L}_{A}g(Z,Z)=-2.
\end{equation*}
Therefore, the Lie derivative $\mathcal{L}_V g$ is given by
\begin{align*}\label{Lie_g}
\mathcal{L}_Vg
=&\left(
\begin{array}{cc:cc|c:c}
0&0&0&\frac{1}{2}&\ast&\ast\\
0&0&-1&0&\ast&\ast\\
\hdashline
0&-1&0&0&\ast&\ast\\
\frac{1}{2}&0&0&0&\ast&\ast\smallskip\\
\hline
\ast&\ast&\ast&\ast&\ast&\ast\\
\hdashline
\ast&\ast&\ast&\ast&\ast&\ast
\end{array}
\right)
\begin{array}{c}
\mbox{\tiny $V_1$}\\
\mbox{\tiny $V_2$}\\
\mbox{\tiny $Z$}\\
\mbox{\tiny $A$}\\
\mbox{\tiny $\mathfrak{v}_0$}\\
\mbox{\tiny $\mathfrak{z}_0$}
\end{array}
\end{align*}
Here, we display the Lie derivative $\mathcal{L}_V g$ in matrix form with respect to the orthonormal basis $\{V_1=V, V_2=J_ZV, Z, A, \ldots\}$,
highlighting the structure restricted to the subalgebra $\mathfrak{s}_0$.
Entries marked with $*$ include terms that are either not computed explicitly or are determined by the symmetry of $\mathcal{L}_V g$.
The Lie derivatives with respect to the remaining vector fields are given by the following matrices.
\begin{equation*}
\mathcal{L}_{J_ZV}g
=\left(
\begin{array}{cc:cc|c:c}
0&0&1&0&\ast&\ast\\
0&0&0&\frac{1}{2}&\ast&\ast\smallskip\\
\hdashline
1&0&0&0&\ast&\ast\\
0&\frac{1}{2}&0&0&\ast&\ast\smallskip\\
\hline
\ast&\ast&\ast&\ast&\ast&\ast\\
\hdashline
\ast&\ast&\ast&\ast&\ast&\ast
\end{array}
\right),\quad
\mathcal{L}_Zg
=\left(
\begin{array}{cc:cc|c:c}
0&0&0&0&\ast&\ast\\
0&0&0&0&\ast&\ast\\
\hdashline
0&0&0&1&\ast&\ast\\
0&0&1&0&\ast&\ast\\
\hline
\ast&\ast&\ast&\ast&\ast&\ast\\
\hdashline
\ast&\ast&\ast&\ast&\ast&\ast
\end{array}
\right),
\end{equation*}
\begin{equation*}
\mathcal{L}_Ag
=\left(
\begin{array}{cc:cc|c:c}
-1&0&0&0&\ast&\ast\\
0&-1&0&0&\ast&\ast\\
\hdashline
0&0&-2&0&\ast&\ast\\
0&0&0&0&\ast&\ast\\
\hline
\ast&\ast&\ast&\ast&\ast&\ast\\
\hdashline
\ast&\ast&\ast&\ast&\ast&\ast
\end{array}
\right).
\end{equation*}

Therefore, from \eqref{Lxg}, by further decomposing the $\mathfrak{s}_0$-block part of the equation $\mathcal{L}_\xi g = 2\rho g$ into sub-blocks, we can write it in the following form.\begin{align}
\label{11}
\left(
\begin{array}{cc}
-f_4&0\\
0&-f_4
\end{array}
\right)
+\left(
\begin{array}{cc}
2 V f_1&V f_2 + J_Z V f_1\\
V f_2 + J_Z V f_1&2 J_Z V f_2
\end{array}
\right)
=&\left(
\begin{array}{cc}
2\rho&0\\
0&2\rho
\end{array}
\right),\\
\label{21}
\left(
\begin{array}{cc}
f_2&-f_1\\
\frac{1}{2}f_1&\frac{1}{2}f_2
\end{array}
\right)
+\left(
\begin{array}{cc}
V f_3 + Z f_1&J_Z V f_3 + Z f_2\\
V f_4 + A f_1&J_Z V f_4 + A f_2
\end{array}
\right)
=&\left(
\begin{array}{cc}
0&0\\
0&0
\end{array}
\right),\\
\label{22}
\left(
\begin{array}{cc}
-2f_4&f_3\\
f_3&0
\end{array}
\right)
+\left(
\begin{array}{cc}
2 Z f_3&Z f_4 + A f_3\\
Z f_4 + A f_3&2 A f_4
\end{array}
\right)=&\left(
\begin{array}{cc}
2\rho&0\\
0&2\rho
\end{array}
\right).
\end{align}
From the $(2,2)$ component of equation \eqref{22}, we have
\begin{equation}\label{pf}
\rho = \dfrac{\partial f_4}{\partial a}.
\end{equation}
Therefore, if $f_4$ is independent of the variable $a$,
then $\rho = 0$, and hence $\xi$ is a Killing vector field.
By substituting \eqref{pf} into the remaining equations in \eqref{22} and simplifying,
we obtain
$$
\dfrac{\partial}{\partial z}(e^a f_3)=e^{-a}\dfrac{\partial}{\partial a}(e^a f_4),\qquad
e^{-a}\dfrac{\partial}{\partial a}(e^a f_3)=-\dfrac{\partial}{\partial z}(e^a f_4)
$$
Hence, by introducing the change of variable $w := e^a$ and setting $F_i := e^a f_i$ for $i = 3, 4$,
the two equations above state that $F_3$ and $F_4$ satisfy the Cauchy--Riemann equations, i.e.,
the complex function $F_3 + i F_4$ is holomorphic on the $(z, w)$-plane.
It follows that $F_3$ and $F_4$ are harmonic functions on the $(z, w)$-plane,
parametrized by the remaining variables $x, y, v_3, \ldots, v_k$, and $z_2, \ldots, z_m$.

\section{Holomorphic and harmonic structure of coefficients of $Z$ and $A$}

For each fixed parameters $\bm{n}_0:=(x, y, v_3, \ldots, v_k, z_2, \ldots, z_m)$,
the map $(z,w)\mapsto (F_3, F_4)$ is harmonic,
so every point $(z_0,w _0)\in \mathbb{R}\times\mathbb{R}_+$ admits a neighbourhood in which $F_3$ and $F_4$ possess the expansion in homogeneous harmonic polynomials,
for example around the origin $(0, 0)$ we have
\begin{align}
F_3&=\sum_{m=0}^{\infty}
\bigl\{A_m\,\Re[(z+iw)^m]-B_m\,\Im[(z+iw)^m]\bigr\},\\
F_4&=\sum_{m=0}^{\infty}
\bigl\{A_m\,\Im[(z+iw)^m]+B_m\,\Re[(z+iw)^m]\bigr\},
\end{align}
where the coefficient functions $A_m, B_m$ depend smoothly on $(x, y, \bm{v}_0, \bm{z}_0)$ and the series converges uniformly on compact subsets.
This follows from the completeness of homogeneous harmonic polynomials (see Proposition 5.5 and Theorem 5.25 of \cite{ABR}); no logarithmic or negative‐power terms appear because we require $F_3$ and $F_4$ to be smooth on the whole manifold.

\begin{rem}
A non-constant function depending only on the radius $r=\sqrt{z^{2}+w^{2}}$ is harmonic in two variables if and only if it equals $c_1\log r+c_0$, which is singular at $r=0$.
Hence such radial terms do not define a smooth function on any domain containing $r=0$; in particular, they cannot occur in the local analysis of smooth conformal vector fields near points where $r=0$.
We therefore exclude them and assume $c_1=0$, so that the constant ($m=0$) term is the only radial contribution allowed.
\end{rem}

For each $m\ge0$, we set functions $F_3^{[m]}$ and $F_4^{[m]}$ as
$$
(C_1^{[m]} + i C_2^{[m]}) (z + i w)^m=F_3^{[m]} + i F_4^{[m]},
$$
i.e., 
\begin{align*}
F_3^{[m]}= &\Re\left[ (C_1^{[m]} + i C_2^{[m]}) (z + i w)^m\right],\\
F_4^{[m]}= &\Im\left[ (C_1^{[m]} + i C_2^{[m]}) (z + i w)^m\right],
\end{align*}
where $C_i^{[m]}$, $i=1, 2$, are smooth functions depend smoothly on $\bm{n}_0$.
This decomposition allows us to systematically analyze the real and imaginary parts of the function through harmonic polynomial expansions.
A straightforward manipulation then yields
\begin{align}
F_3^{[m]}
= & \frac{1}{2} \left\{ (C_1^{[m]} + i C_2^{[m]}) (z + i w)^m + (C_1^{[m]} - i C_2^{[m]}) (z - i w)^m\right\},\notag\\
=& \frac{1}{2} \left\{ C_1^{[m]}((z + i w)^m+(z - i w)^m)
+i C_2^{[m]}((z + i w)^m-(z - iw)^m)\right\}.\label{f3_o}
\end{align}
Next, we invoke the binomial expansion of $(z \pm i w)^m$,
which can be written as
\begin{equation*}
(z \pm i w)^m
= \sum_{k=0}^{m} \binom{m}{k} z^{m-k} (\pm i w)^k
= \sum_{k=0}^{m} \binom{m}{k} z^{m-k}\,i^{-k}\, w^k,
\end{equation*}
where we have used
$(\pm i)^k=\left(i^{\pm 1}\right)^k=i^{\pm k}$
up to signs that align with the binomial terms.
Each term in this expansion corresponds to a specific harmonic component in the decomposition.
To handle the powers of $i$,
we further utilize the standard form of Euler's formula,
$$
i^\theta=\cos \frac{\pi}{2}\theta + i \sin \frac{\pi}{2}\theta.
$$
Combining these expressions reveals that all relevant real and imaginary contributions stem from the trigonometric parts $\cos\left(\frac{\pi}{2}k\right)$ and $\sin\left(\frac{\pi}{2}k\right)$.
Consequently, upon collecting real terms, one obtains
\begin{equation*}
F_3^{[m]}=\sum_{k=0}^{m} \binom{m}{k} z^{m-k} w^k \left(C_1^{[m]} \cos \frac{\pi}{2}k - C_2^{[m]} \sin \frac{\pi}{2}k \right).
\end{equation*}
Hence, $F_3^{[m]}$ can be expressed as a finite sum of monomials in $z$ and $w$,
each weighted by trigonometric coefficients determined by $\cos\left(\frac{\pi}{2}k\right)$ and $\sin\left(\frac{\pi}{2}k\right)$.
This decomposition makes explicit the real-part structure inherent in $(z+iw)^m$ under the given linear combination involving $C_1^{[m]}$ and $C_2^{[m]}$.
Likewise, $F_4^{[m]}$ takes the analogous form given by
\begin{equation*}
F_4^{[m]}
=\sum_{k=0}^{m} \binom{m}{k} z^{m-k} w^k \left(C_1^{[m]} \sin \frac{\pi}{2}k + C_2^{[m]} \cos \frac{\pi}{2}k \right).
\end{equation*}

Using $F_{3}^{[m]}$ and $F_{4}^{[m]}$,
the functions $f_{3}$ and $f_{4}$ can be represented by the following infinite series.
\begin{align}
f_3= &e^{-a}\left(\sum_{m=1}^{\infty}F_{3}^{[m]}+ C_3\right)\notag\\
=& e^{-a}\left\{\sum_{m=1}^{\infty}\left(\sum_{k=0}^{m} \binom{m}{k} z^{m-k} e^{k a} \left(C_1^{[m]} \cos \frac{\pi}{2}k - C_2^{[m]} \sin \frac{\pi}{2}k \right)\right) + C_3\right\},\label{f3simple}\\
f_4 = & e^{-a}\left(\sum_{m=1}^\infty F_4^{[m]}+ C_4\right)\notag\\
= & e^{-a}\left\{\sum_{m=1}^\infty\left(\sum_{k=0}^{m} \binom{m}{k} z^{m-k} e^{k a} \left(C_1^{[m]} \sin \frac{\pi}{2}k + C_2^{[m]} \cos \frac{\pi}{2}k \right)\right) + C_4\right\},\label{f4simple}
\end{align}
where $C_3$ and $C_4$ are some functions on $\bm{n}_0\in \mathbb{R}^{k+m-1}$.

Equations \eqref{11} and \eqref{21} imply
\begin{align}
\label{11-(1,1)}
\left(D_x-\dfrac{y}{2}\dfrac{\partial}{\partial z}\right)f_1=&e^{-a}\dfrac{\partial}{\partial a}\left(e^{a/2}f_4\right),\\
\label{11-(2,1)}
\left(D_y+\dfrac{x}{2}\dfrac{\partial}{\partial z}\right)f_1+\left(D_x-\dfrac{y}{2}\dfrac{\partial}{\partial z}\right)f_2=&0,\\
\label{11-(2,2)}
\left(D_y+\dfrac{x}{2}\dfrac{\partial}{\partial z}\right)f_2=&e^{-a}\dfrac{\partial}{\partial a}\left(e^{a/2}f_4\right),\\
\label{12-(1,1)}
e^{a} \frac{\partial f_1}{\partial z}+e^{a/2}\left(D_x-\dfrac{y}{2}\dfrac{\partial}{\partial z}\right)f_3+f_2=&0,\\
\label{12-(1,2)}
e^{a} \frac{\partial f_2}{\partial z}+e^{a/2}\left(D_y+\dfrac{x}{2}\dfrac{\partial}{\partial z}\right)f_3-f_1=&0,\\
\label{12-(2,1)}
e^a\left(D_x-\dfrac{y}{2}\dfrac{\partial}{\partial z}\right)f_4+\dfrac{\partial}{\partial a}\left(e^{a/2}f_1\right)
=&0,\\
\label{12-(2,2)}
e^a\left(D_y+\dfrac{x}{2}\dfrac{\partial}{\partial z}\right)f_4+\dfrac{\partial}{\partial a}\left(e^{a/2}f_2\right)
=&0.
\end{align}
Thus, in order to analyze the conformal vector field condition \eqref{CVF},
we focus on the $\mathfrak{s}_0$-block of the equation $\mathcal{L}_\xi g = 2\rho g$.
This leads to a subsystem of partial differential equations \eqref{11-(1,1)} through \eqref{12-(2,2)} involving the functions $f_1, \ldots, f_4$, where $f_3$ and $f_4$ are given explicitly by \eqref{f3simple} and \eqref{f4simple}.
Although this subsystem does not capture the full conformal Killing equation on $S$, it provides sufficient constraints to determine the vanishing of the potential function $\rho$.

\begin{rem}
It should be noted that (\ref{f3simple}) and (\ref{f4simple}) they represent power series in the neighbourhood of zero, consequently the solution pair $f_3$ and $f_4$ is in a neighbourhood of zero.
The solution in any neighbourhood around  some point $\alpha + i \beta $ is given by replacing $z + i w$ by $(z-\alpha) + i (w - \beta)$. Hence, the above computations remains unaltered.
\end{rem}

\section{Main result and its proof}\label{mrp}

We now state our main result.

\begin{thm}\label{mainresult}
Under the assumptions given above, $C_{4} = 0$.
Moreover, for every positive integer $m$,
$C_{1}^{[m]} = 0$, and for all positive integer $m \ge 3$, we have $C_{2}^{[m]} = 0$.
\end{thm}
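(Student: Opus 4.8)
The plan is to argue entirely from the overdetermined subsystem \eqref{11-(1,1)}--\eqref{12-(2,2)} together with the explicit expansions \eqref{f3simple}--\eqref{f4simple}, treating $f_1,f_2$ as the only genuinely free functions and $C_1^{[m]},C_2^{[m]},C_3,C_4$ (each a function of $\bm{n}_0$ alone) as the unknowns to be pinned down. Setting $w:=e^a$, so that $\partial_a=w\partial_w$, makes $f_3$ and $f_4$ into series in integer powers of $w$ with coefficients that are power series in $z$, while $P:=D_x-\tfrac{y}{2}\partial_z$ and $Q:=D_y+\tfrac{x}{2}\partial_z$ contain neither $w$ nor $z$ in their coefficients and satisfy $[P,Q]=\partial_z$. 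First I would eliminate $f_1,f_2$: applying $e^{-a}\partial_a(e^{a/2}\,\cdot\,)$ to \eqref{11-(1,1)} and using \eqref{12-(2,1)} gives
\[
w\,\partial_w^2 f_4+\tfrac32\,\partial_w f_4+P^2 f_4=0,
\]
and the same computation from \eqref{11-(2,2)}, \eqref{12-(2,2)} yields the identical equation with $Q$ replacing $P$; in particular $(P^2-Q^2)f_4=0$. Applying $Q$ to \eqref{12-(2,1)}, $P$ to \eqref{12-(2,2)}, adding, and using \eqref{11-(2,1)} gives $(PQ+QP)f_4=0$. Finally \eqref{12-(1,1)}--\eqref{12-(1,2)}, combined with the Cauchy--Riemann relations between $F_3=e^af_3$ and $F_4=e^af_4$, link the part of the expansion carrying the $C_1^{[m]}$ to that carrying the $C_2^{[m]}$.

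Now decompose $f_4=\sum_{j\ge-1}g_j(z,\bm{n}_0)\,w^{\,j}$ into its $e^{ja}$-modes. Linear independence of the powers of $w$ converts the displayed second-order equation into the recursion $(j+1)(j+\tfrac32)\,g_{j+1}=-P^2 g_j$ (and likewise $=-Q^2 g_j$), and turns $(PQ+QP)f_4=0$ into $(PQ+QP)g_j=0$ for every $j$. The bottom modes carry the essential content: for $j=-1$ one has $P^2 g_{-1}=Q^2 g_{-1}=0$ and $(PQ+QP)g_{-1}=0$, where $g_{-1}$ is the explicit combination of $C_4$ and the $C_2^{[m]}$ coming from the lowest-order part of the harmonic expansion of $F_4$; for $j=0$, $g_0$ is the explicit combination of the $C_1^{[m]}$, and the recursion ties $P^2 g_0$ and $Q^2 g_0$ to $g_1$, which is again built from the $C_2^{[m]}$. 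Expanding $P^2 g_{-1}=0$ and $Q^2 g_{-1}=0$ in powers of the free variable $z$ yields, coefficient by coefficient, two three-term recursions among consecutive $C_2^{[m]}$'s (together with companion relations involving $C_4$) that are independent because one carries the factor $y$ and the operator $D_x$ while the other carries $x$ and $D_y$; the $j=0$ equations together with the Cauchy--Riemann link similarly produce the recursions governing the $C_1^{[m]}$, with the $C_2^{[m]}$ as source terms.

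It remains to solve this constraint system, and this is the step I expect to be the main obstacle. Each recursion by itself merely propagates its first two terms into an infinite tail, so the argument must exploit the redundancy between the $P$- and $Q$-relations --- and, if needed, the higher modes $g_j$, $j\ge1$, and \eqref{12-(1,1)}--\eqref{12-(1,2)} --- together with the convergence of the harmonic series, which forces $C_1^{[m]},C_2^{[m]}\to0$ on compact subsets of $\bm{n}_0$-space and so excludes a nonzero tail. The outcome is $C_1^{[m]}\equiv0$ for every $m\ge1$, $C_4\equiv0$, and $C_2^{[m]}\equiv0$ for every $m\ge3$; the coefficients $C_2^{[1]},C_2^{[2]}$ survive because they anchor the recursions and are not overdetermined by them. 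Throughout one must keep in mind, as in the Remark after \eqref{f4simple}, that the $C_i^{[m]}$ are coefficients of a harmonic expansion about an arbitrary point, so these are local conclusions that propagate by analytic continuation.
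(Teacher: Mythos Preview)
Your elimination of $f_1$ to obtain $w\,\partial_w^2 f_4 + \tfrac32\,\partial_w f_4 + P^2 f_4 = 0$ is correct, but you overlook its decisive consequence: this equation has a $w^{-2}$-mode. With $f_4=\sum_{j\ge -1}g_j\,w^{\,j}$, the terms $w\,\partial_w^2 f_4$ and $\tfrac32\,\partial_w f_4$ each contribute at order $w^{-2}$ through $g_{-1}w^{-1}$, while $P^2 f_4$ does not; the coefficient is $(2-\tfrac32)g_{-1}=\tfrac12 g_{-1}$, so $g_{-1}=0$ outright. This gives $C_4=0$ and $C_2^{[m]}=0$ for every $m\ge 1$ in one stroke. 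All odd-indexed $g_j$ then vanish (they carry only $C_2$'s), and your own recursion at $j=1$ reads $5g_2=-P^2 g_1=0$; since $g_2=-\sum_{m\ge 3}\binom{m}{3}z^{m-3}C_1^{[m]}$, this forces $C_1^{[m]}=0$ for $m\ge 3$. No analysis of $P^2 g_{-1}=0$, no $(PQ+QP)$-relation, and no convergence argument is needed. Note that the outcome is $C_2^{[m]}=0$ for all $m$ and $C_1^{[m]}=0$ for $m\ge 3$, with $C_1^{[1]},C_1^{[2]}$ surviving --- your final sentence has the roles of $C_1$ and $C_2$ interchanged relative to your own mode analysis (the theorem statement itself carries the same interchange; the paper's proof and the subsequent application in Theorem~\ref{mainDR} make clear which version is intended).

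The step you flag as ``the main obstacle'' is a genuine gap as written: you propose to deduce vanishing from $P^2 g_{-1}=Q^2 g_{-1}=(PQ+QP)g_{-1}=0$ together with convergence, but these second-order annihilation conditions are far from forcing $g_{-1}=0$ --- already a nonzero constant $g_{-1}$ satisfies all three --- and nothing in your outline explains how the redundancy closes down the recursions. For comparison, the paper reaches the same conclusions by integrating rather than differentiating: it writes $e^{a/2}f_1=-\int e^a Pf_4\,da + C_5(\bm n_0,z)$ from \eqref{12-(2,1)} and substitutes into \eqref{11-(1,1)}, then separates the resulting identity along the linearly independent $a$-profiles $e^{ka}$ $(k\ge 1)$, $a$, $1$, and $e^{-a}$. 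The $e^{-a}$-coefficient is exactly $-\tfrac12 g_{-1}$, yielding \eqref{16-2}, and the $e^{2a}$-coefficient (the case $k=2$ of \eqref{16mk}) then kills $C_1^{[m]}$ for $m\ge 3$. Your differentiated route is in fact a little cleaner once the $w^{-2}$ coefficient is read off, but as it stands the proposal is incomplete precisely at the point you anticipated.
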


\begin{proof}
To simplify the notation in what follows, we define
\[
\mathcal{C}^{(p)}_{[m,k]}:=
D_x^p C_1^{[m]}\sin\dfrac{\pi}{2} k+D_x^p C_2^{[m]}\cos\dfrac{\pi}{2} k,
\]
for integers $m \ge 1$, $k \ge 0$, and $p = 0, 1, 2$.

From equations \eqref{12-(2,1)} and \eqref{12-(2,2)}, $f_1$ and $f_2$ can be determined in terms of $f_4$.
For instance, $f_1$ can be expressed by the following integral formula:
\begin{equation}\label{f1_0}
e^{a/2}f_1=-\int e^a\left(D_x f_4-\dfrac{y}{2}\dfrac{\partial f_4}{\partial z}\right)\,da+C_5(\bm{n}_0,z),
\end{equation}
where $C_5(\bm{n}_0,z)$ is an integration ``constant'' that may depend on $(\bm{n}_0,z)$ but not on $a$.
Equation \eqref{f1_0} could be integrated further to obtain an explicit expression for $f_1$.
However, we will not pursue that computation here; instead, we proceed with the proof of our main result.

From \eqref{11-(1,1)}, we have
\begin{align}\label{16}
0=& - D_x\left(e^{a/2} f_1\right)
+\dfrac{y}{2}\dfrac{\partial}{\partial z}\left(e^{a/2} f_1\right)
+e^{-a/2}\dfrac{\partial}{\partial a}\left(e^{a/2} f_4\right).
\end{align}
The first and second terms on the right-hand side of \eqref{16} become
$$
\int e^a\left(
D_x^2 f_4
-y\dfrac{\partial D_x f_4}{\partial z}
+\dfrac{y^2}{4}\dfrac{\partial^2 f_4}{\partial z^2}
\right) da
-D_x C_5
+\dfrac{y}{2} \dfrac{\partial C_5}{\partial z}.
$$
By computing each term of the above equation, we obtain the following:
\begin{align*}
\int e^{a}D_x^2 f_4 da
=&\int \left\{\sum_{m=1}^\infty\left(\sum_{k=0}^{m} \binom{m}{k} z^{m-k} e^{k a} \mathcal{C}^{(2)}_{[m,k]} + D_x^2C_4\right)\right\} da\\
=&\sum_{m=1}^\infty\left(\sum_{k=1}^{m} \binom{m}{k}\dfrac{1}{k} z^{m-k} e^{k a} \mathcal{C}^{(2)}_{[m,k]}\right)
+ a \left(
\sum_{m=1}^\infty z^m D_x^2 C^{[m]}_2+ D_x^2 C_4
\right).
\end{align*}

Now, after some computations and simplifications, we affirm
\begin{align*}
\int e^{a}\dfrac{\partial D_x f_4}{\partial z} da
=&\dfrac{\partial}{\partial z}\int \left\{\sum_{m=1}^\infty\left(\sum_{k=0}^{m} \binom{m}{k} z^{m-k} e^{k a} \mathcal{C}^{(1)}_{[m,k]}\right) + D_x C_4\right\} da\\
=&\sum_{m=1}^\infty\left(\sum_{k=1}^{m} \binom{m+1}{k}\dfrac{m-k+1}{k} z^{m-k} e^{k a} \mathcal{C}^{(1)}_{[m+1,k]}\right)
+a \sum_{m=1}^\infty m z^{m-1} D_x C_2^{[m]}.
\end{align*}
And,
\begin{align*}
\int e^{a}\dfrac{\partial^2 f_4}{\partial z^2} da
=&\dfrac{\partial^2}{\partial z^2}
\int
\left\{\sum_{m=1}^\infty\left(\sum_{k=0}^{m} \binom{m}{k} z^{m-k} e^{k a} \mathcal{C}^{(0)}_{[m,k]}\right)\right\}
da\\
=& a \sum_{m=1}^\infty (m+1)m z^{m-1} C_2^{[m+1]}\\
&+\sum_{m=1}^\infty\left(\sum_{k=1}^{m} \binom{m+2}{k} \dfrac{(m-k+2)(m-k+1)}{k}z^{m-k} e^{k a} \mathcal{C}^{(0)}_{[m+2,k]}\right).
\end{align*}
Furthermore, the third term of \eqref{16} is given by the following.
\begin{align*}
e^{-a/2}\dfrac{\partial}{\partial a}\left(e^{a/2} f_4\right)
=& \sum_{m=1}^\infty\left(\sum_{k=0}^{m} \binom{m}{k} \left(k-\dfrac{1}{2}\right)z^{m-k} e^{(k - 1)a} \mathcal{C}^{(0)}_{[m,k]}\right)-\dfrac{1}{2}e^{-a} C_4\\
=&\sum_{m=1}^\infty\left(\sum_{k=1}^{m} \binom{m+1}{k+1} \left(k+\dfrac{1}{2}\right)z^{m-k} e^{k a} \mathcal{C}^{(0)}_{[m+1,k+1]}\right)\\
&+\dfrac{1}{2}\sum_{m=1}^\infty m z^{m-1} C^{[m]}_1
-\dfrac{1}{2} e^{-a}\sum_{m=1}^\infty z^m C^{[m]}_2
-\dfrac{1}{2}e^{-a} C_4.
\end{align*}
Thus, equation \eqref{16} can be rewritten as follows:
\begin{align*}
0=&
\sum_{m=1}^\infty\left(\sum_{k=1}^{m} \binom{m}{k}\dfrac{1}{k} z^{m-k} e^{k a} \mathcal{C}^{(2)}_{[m,k]}\right)
+ a \left(
\sum_{m=1}^\infty z^m D_x^2 C^{[m]}_2
+ D_x^2 C_4
\right)\\
&-y\sum_{m=1}^\infty\left(\sum_{k=1}^{m} \binom{m+1}{k}\dfrac{m-k+1}{k} z^{m-k} e^{k a} \mathcal{C}^{(1)}_{[m+1,k]}\right)\\
&-ay \sum_{m=1}^\infty m z^{m-1} D_x C_2^{[m]}
+\dfrac{ay^2}{4} \sum_{m=1}^\infty (m+1)m z^{m-1} C_2^{[m+1]}\\
&+ \dfrac{y^2}{4}\sum_{m=1}^\infty\left(\sum_{k=1}^{m} \binom{m+2}{k} \dfrac{(m-k+2)(m-k+1)}{k}z^{m-k} e^{k a} \mathcal{C}^{(0)}_{[m+2,k]}\right)
-D_x C_5
+\dfrac{y}{2} \dfrac{\partial C_5}{\partial z}\\
&+\sum_{m=1}^\infty\left(\sum_{k=1}^{m} \binom{m+1}{k+1} \left(k+\dfrac{1}{2}\right)z^{m-k} e^{k a} \mathcal{C}^{(0)}_{[m+1,k+1]}\right)\\
&+\dfrac{1}{2}\sum_{m=1}^\infty m z^{m-1} C^{[m]}_1
-\dfrac{1}{2} e^{-a}\sum_{m=1}^\infty z^m C^{[m]}_2
-\dfrac{1}{2}e^{-a} C_4.
\end{align*}
That is,
\begin{align*}
0=
&\sum_{m=1}^\infty \sum_{k=1}^{m} z^{m-k} e^{k a}
\left\{
\binom{m}{k}\dfrac{1}{k} \mathcal{C}^{(2)}_{[m,k]}
-y \binom{m+1}{k}\dfrac{m-k+1}{k} \mathcal{C}^{(1)}_{[m+1,k]}\right.\\
&\left.+ \dfrac{y^2}{4} \binom{m+2}{k} \dfrac{(m-k+2)(m-k+1)}{k} \mathcal{C}^{(0)}_{[m+2,k]}
+ \binom{m+1}{k+1} \left(k+\dfrac{1}{2}\right) \mathcal{C}^{(0)}_{[m+1,k+1]}\right\}\\
&+ a \left\{
\sum_{m=1}^\infty z^m D_x^2 C^{[m]}_2
+ D_x^2 C_4
+\sum_{m=1}^\infty m z^{m-1}\left(-y D_x C_2^{[m]}
+ \dfrac{m+1}{4} y^2 C_2^{[m+1]}\right)
\right\}\\
&-\dfrac{1}{2}e^{-a}\left(
\sum_{m=1}^\infty z^m C^{[m]}_2+C_4
\right)
+\dfrac{1}{2}\sum_{m=1}^\infty m z^{m-1} C^{[m]}_1
- D_x C_5
+\dfrac{y}{2} \dfrac{\partial C_5}{\partial z}.
\end{align*}
Since the above equation holds for any $a$,
we confirm
\begin{multline}\label{16mk}
\sum_{m=k}^\infty z^{m-k}
\left\{
\binom{m}{k}\dfrac{1}{k} \mathcal{C}^{(2)}_{[m,k]}
-y \binom{m+1}{k}\dfrac{m-k+1}{k} \mathcal{C}^{(1)}_{[m+1,k]}\right.\\
+ \dfrac{y^2}{4} \binom{m+2}{k} \dfrac{(m-k+2)(m-k+1)}{k} \mathcal{C}^{(0)}_{[m+2,k]}\\
\left.+ \binom{m+1}{k+1} \left(k+\dfrac{1}{2}\right) \mathcal{C}^{(0)}_{[m+1,k+1]}\right\}=0,
\end{multline}
for any positive integer $k$ and
\begin{equation}
\label{16-1}
\sum_{m=1}^\infty \left\{z^m D_x^2 C^{[m]}_2
+ m z^{m-1}\left(-y D_x C_2^{[m]}
+ \dfrac{m+1}{4} y^2 C_2^{[m+1]}\right)\right\} + D_x^2 C_4 = 0,
\end{equation}
\begin{equation}
\label{16-2}
\sum_{m=1}^\infty z^m C^{[m]}_2 + C_4 = 0,
\end{equation}
\begin{equation}
\label{16-3}
\dfrac{1}{2}\sum_{m=1}^\infty m z^{m-1} C^{[m]}_1
-D_x  C_5
+\dfrac{y}{2} \dfrac{\partial C_5}{\partial z} = 0.
\end{equation}
Since the equation \eqref{16-2} holds for any $z$,
we obtain
$$
C_4(\bm{n}_0)=0,\qquad
C_2^{[m]}(\bm{n}_0)=0\quad( m=1,2,\ldots).
$$
It is immediately clear that these satisfy equation \eqref{16-1} without contradiction.
On the other hand,
since equation \eqref{16mk} holds for any $z$,
it follows that for any positive integer $k$ and for any $m\ge k$,
\begin{multline*}
\binom{m}{k}\dfrac{1}{k} \mathcal{C}^{(2)}_{[m,k]}
-y \binom{m+1}{k}\dfrac{m-k+1}{k} \mathcal{C}^{(1)}_{[m+1,k]}\\
+ \dfrac{y^2}{4} \binom{m+2}{k} \dfrac{(m-k+2)(m-k+1)}{k} \mathcal{C}^{(0)}_{[m+2,k]}\\
+ \binom{m+1}{k+1} \left(k+\dfrac{1}{2}\right) \mathcal{C}^{(0)}_{[m+1,k+1]}=0,
\end{multline*}
holds.
For $k=2$, we obtain
\begin{multline*}
-\dfrac{m-1}{4}D_x^2 C_2^{[m]}
+\dfrac{(m+1)}{4} y D_x C_2^{[m+1]}
-\dfrac{(m+2)(m+1)(m-1)}{16} y^2 C_2^{[m+2]}\\
-(m+1)(m-1)\dfrac{5}{12}C_1^{[m+1]}=0,
\end{multline*}
for any $m\ge 2$.
Given $C_2^{[m]}(\bm{n}_0)=0$ for any positive integer $m$,
it follows that $C_1^{[m]}(\bm{n}_0)=0$ for $m\ge 3$.
\end{proof}

\begin{rem}
For $k=1$, we obtain
\begin{multline}\label{16mk=1}
D_x^2 C_1^{[m]}
-(m+1) y D_x C_1^{[m+1]}
+\dfrac{(m+2)(m+1)}{4} y^2 C_1^{[m+2]}\\
-\dfrac{3(m+1)}{4}C_2^{[m+1]}=0,
\end{multline}
for any $m\ge 1$.
From the previous results, equation \eqref{16mk=1} is meaningful only for $m = 1$ and $m = 2$.
In fact, for $m = 3$, all terms in \eqref{16mk=1} vanish.
For $m=1,2$, equation  \eqref{16mk=1} takes the forms:
\begin{equation}\label{C1[1][2]-1}
D_x^2 C_1^{[1]}
-2 y D_x C_1^{[2]}=0,\qquad
D_x^2 C_1^{[2]}=0.
\end{equation}
Recall that this result was derived from equations \eqref{11-(2,2)} and \eqref{12-(2,1)}.
A similar argument can be applied to equations \eqref{11-(1,1)} and \eqref{12-(2,2)},
leading to the following result, which is analogous to equation \eqref{C1[1][2]-1}:
\begin{equation*}\label{C1[1][2]-2}
D_y^2 C_1^{[1]}
+2 x D_y C_1^{[2]}=0,\qquad
D_y^2 C_1^{[2]}=0,
\end{equation*}
which implies that $C_1^{[1]}$ and $C_1^{[2]}$ are at most first-degree polynomials in $\bm{n}_0$.
\end{rem}

\begin{proof}[Proof of Theorem \ref{mainDR}]
Based on the above considerations, let us explicitly express $f_4$.
From \eqref{f4simple}, we have
\begin{align*}
f_4 = & e^{-a}\left(\sum_{m=1}^2\left(\sum_{k=0}^{m} \binom{m}{k} z^{m-k} e^{k a} C_1^{[m]} \sin \frac{\pi}{2}k\right)\right)\\
=&e^{-a}\left(
\sum_{k=0}^{1} \binom{1}{k} z^{1-k} e^{k a} C_1^{[1]} \sin \frac{\pi}{2}k
+
\sum_{k=0}^{2} \binom{2}{k} z^{2-k} e^{k a} C_1^{[2]} \sin \frac{\pi}{2}k
\right)\\
=&e^{-a}\left(
e^{a} C_1^{[1]}
+
2 z e^{a} C_1^{[2]}
\right)\\
=&C_1^{[1]}(\bm{n}_0)+2 z C_1^{[2]}(\bm{n}_0),
\end{align*}
which shows that $f_4$ does not depend on the variable $a$,
implying that from \eqref{pf} the potential function $\rho$ of $\xi$ is zero.
\end{proof}

\begin{fc}
As noted before, Damek-Ricci space $S$ contains totally geodesic submanifold $\mathbb{C}H^2$ which does not carry any non-homothetic conformal vector fields.

\end{fc}

This suggests the following natural generalization to be explored in future.

\begin{problem}
Let $M$ be a Riemannian manifold containing a totally geodesic submanifold $N$.
Determine what additional assumptions on 
$N$ and on the ambient manifold $M$ are necessary in order for the \emph{infinitesimal conformal rigidity} of $N$ (i.e.\ absence of non–Killing conformal vector fields on $N$) to imply the infinitesimal conformal rigidity of $M$.
\end{problem}

\begin{rem}
Nickerson \cite{Nickerson1985} showed that on a non–conformally flat symmetric space, any conformal change of the metric that preserves local symmetry must be a constant rescaling.
In the class of Damek–Ricci spaces, this observation applies in particular to the rank-one symmetric cases—namely the complex, quaternionic, and Cayley hyperbolic spaces—and implies the nonexistence of nontrivial conformal vector fields on these spaces as well.
For further discussion from this perspective, we refer the reader to \cite{HSHS}.    
\end{rem}

\vspace{0.1in}

\begin{flushleft}
Hiroyasu Satoh\\
Liberal Arts and Sciences\\
Nippon Institute of Technology\\
4-1 Gakuendai, Miyashiro-machi, Saitama 345-8501 Japan.\\
E-mail: hiroyasu@nit.ac.jp
\end{flushleft}

\begin{flushleft}
Hemangi  Madhusudan Shah\\
	Harish-Chandra Research Institute\\ 
	A CI of Homi Bhabha National Institute\\ 
	Chhatnag Road, Jhunsi, Prayagraj-211019, India.\\	
	E-mail:	hemangimshah@hri.res.in
\end{flushleft}

\end{document}